\documentclass[11pt]{article}

\usepackage{amsmath,amssymb,amsthm,mathtools}
\usepackage{pxfonts}
\usepackage[margin=1.15in]{geometry}
\usepackage[colorlinks=true,linkcolor=blue,citecolor=blue,urlcolor=blue]{hyperref}

\newtheorem{theorem}{Theorem}
\newtheorem{proposition}{Proposition}
\theoremstyle{definition}
\newtheorem{assumption}{Assumption}
\theoremstyle{remark}

\newcommand{\E}{\mathsf{E}}
\newcommand{\Pp}{\mathsf{P}}
\newcommand{\R}{\mathcal{R}}

\title{Short-maturity skew stickiness ratio under local volatility}
\author{Masaaki Fukasawa\\[1mm]
\small The University of Osaka}
\date{}

\begin{document}
\maketitle

\begin{abstract}
We prove that the skew stickiness ratio converges to two at short maturity
under local volatility models.  This appears to be the first rigorous proof
of this limit for a general time-dependent local volatility function.  As a
by-product, we strengthen the one-half rule of the implied volatility skew by
removing uniform ellipticity and global bounds on spatial derivatives of
order at least two.  The proof uses a first-order Watanabe expansion.
\end{abstract}

\section{Introduction}

The skew stickiness ratio (SSR), introduced by Bergomi \cite{Bergomi},
measures the infinitesimal response of the at-the-money implied volatility to
a spot move, in units of the at-the-money log-strike skew.  It distinguishes
models with the same static smile but different smile dynamics and measures
the associated cross-gamma risk.

Bergomi identified the value two under local volatility models.  For a local
volatility function independent of calendar time, it follows exactly from a
backward--forward symmetry.  For a general time-dependent local volatility
function, he gave two formal arguments.  They are summarized below after the
notation has been fixed.

Here we prove the short-maturity convergence to two under local smoothness and
positivity, with a global bound only on the first derivative of the local volatility function.  To the best of our knowledge,
this is the first rigorous proof of this limit for a general time-dependent
local volatility function.  
It involves the one-half rule of the implied volatility skew, for which a rigorous proof was given by Al\`os and Garc\'ia-Lorite
\cite[Section~7.6.2]{AlosGarciaLorite} for bounded, uniformly positive local
volatility functions with bounded derivatives.  Our result improves theirs by
removing the uniform positive lower bound and the global bounds on spatial
derivatives of order at least two. 

\section{Model and main result}\label{sec:main}

We work at zero interest and dividend rates.  Let
\begin{equation}\label{eq:SDE}
    \mathrm{d}S_u=a(S_u,u)\,\mathrm{d}B_u,
    \qquad S_t=s>0,
\end{equation}
and denote the relative local volatility by
\begin{equation}\label{eq:relative-local-vol}
    v(s,t)=\frac{a(s,t)}{s}.
\end{equation}
For a maturity $T>t$, let
\begin{equation*}
    P(t,s;K)=\E\big[(K-S_T)_+\mid S_t=s\big].
\end{equation*}
Let $p_{\mathrm{BS}}(s,K,w)$ be the Black--Scholes put price with
total variance $w$, and define the implied total variance $\Sigma$ and
the implied volatility $\widehat v$ by
\begin{equation}\label{eq:implied-variance}
    P(t,s;K)=p_{\mathrm{BS}}\bigl(s,K,\Sigma(t,s;K)\bigr),
    \qquad
    \widehat v(t,s;K)=\sqrt{\frac{\Sigma(t,s;K)}{T-t}}.
\end{equation}
The maturity $T$ is suppressed from the notation on the right-hand side.
The at-the-money log-strike skew is
\begin{equation}\label{eq:skew}
    \mathcal{S}(t,s;T)
      =\left.\frac{\partial}{\partial k}
        \widehat v(t,s;se^k)\right|_{k=0}
      =s\widehat v_K(t,s;s).
\end{equation}

Set $\widehat v_u^S=\widehat v(u,S_u;S_u)$.  The SSR is defined by
\begin{equation}\label{eq:SSR-dynamic}
    \R(t,s;T)
      =\frac{1}{\mathcal{S}(t,s;T)}
        \left.
        \frac{\mathrm{d}\langle\widehat v^S,\log S\rangle_u}
             {\mathrm{d}\langle\log S\rangle_u}
        \right|_{u=t,S_t=s}.
\end{equation}
Since the model is Markovian,
\begin{equation}\label{eq:SSR-Markov}
    \R(t,s;T)
      =\frac{\widehat v_s(t,s;s)+\widehat v_K(t,s;s)}
             {\widehat v_K(t,s;s)}
      =\frac{\Sigma_s(t,s;s)+\Sigma_K(t,s;s)}
             {\Sigma_K(t,s;s)}
\end{equation}
whenever the denominator is nonzero.

\paragraph{Bergomi's arguments.}
When $v$ is independent of calendar time, the backward pricing equation in
the initial spot and the Dupire forward equation in the strike imply the
symmetry
\[
    \widehat v(t,s;K)=\widehat v(t,K;s).
\]
Consequently, $\widehat v_s(t,s;s)=\widehat v_K(t,s;s)$ and the SSR equals two
exactly.

For a general time-dependent local volatility function, Bergomi also obtained
the value two from a first-order perturbation in the log-moneyness slope.  The
expansion is taken around a flat local volatility function at fixed maturity.
Both the numerator and denominator of the SSR vanish in the reference model,
and no remainder estimate uniform in maturity is provided.  Thus the
calculation gives the short-maturity limit of the first-order approximation,
rather than the limit for a fixed model.

His other argument differentiates the pointwise short-maturity harmonic-mean
formula for implied volatility.  Pointwise convergence does not imply
convergence of the spot and strike derivatives entering the SSR.  This
differentiation is therefore also formal
\cite[Sections~2.5.1--2.5.3]{BergomiBook}.

\begin{assumption}\label{ass:regularity}
For some $T_0>0$, $v\in C^\infty((0,\infty)\times[0,T_0])$ and $v>0$.
The solution to \eqref{eq:SDE} started from any positive initial value
remains in $(0,\infty)$, and
\begin{equation}\label{eq:as-bound}
    L:=\sup_{r>0,\,0\leq u\leq T_0}|a_s(r,u)|
      =\sup_{r>0,\,0\leq u\leq T_0}|v(r,u)+r v_s(r,u)|<\infty.
\end{equation}
\end{assumption}

The bound \eqref{eq:as-bound} allows $a$ to be extended to a coefficient on
$\mathbb{R}\times[0,T_0]$ which is globally Lipschitz in the state variable,
uniformly in time, and has linear growth.  Hence \eqref{eq:SDE} admits a
unique global strong solution.  The additional condition in
Assumption~\ref{ass:regularity} is that this solution remains strictly
positive.

\begin{theorem}\label{thm:main}
Under Assumption~\ref{ass:regularity}, for every $t<T_0$ and $s>0$,
\begin{equation}\label{eq:half-rule}
    \lim_{T\downarrow t}\mathcal{S}(t,s;T)
       =\frac12s v_s(s,t).
\end{equation}
If $v_s(s,t)\neq0$, then $\R(t,s;T)$ is well-defined for all sufficiently
small $T-t$ and
\begin{equation}\label{eq:SSR-limit}
    \lim_{T\downarrow t}\R(t,s;T)=2.
\end{equation}
\end{theorem}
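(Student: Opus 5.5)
We reduce both statements to short-time expansions, as $\tau=T-t\downarrow0$, of three first-order sensitivities of the put price at the money. These are
\[
P_K(t,s;s)=\Pp(S_T<s),\qquad P_s(t,s;s)=-\E\bigl[Y_T\mathbf 1_{\{S_T<s\}}\bigr],\qquad P(t,s;s),
\]
where $Y=\partial S/\partial s$ is the first variation process, $\mathrm dY_u=a_s(S_u,u)Y_u\,\mathrm dB_u$, $Y_t=1$. Assumption~\ref{ass:regularity} gives $|a_s|\le L$, so $Y$ has moments of all orders and $P$ is differentiable in $s$.

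Differentiating the identity \eqref{eq:implied-variance} at $K=s$ gives
\[
\Sigma_K=\frac{P_K-\partial_Kp_{\mathrm{BS}}}{\partial_wp_{\mathrm{BS}}},\qquad
\Sigma_s=\frac{P_s-\partial_sp_{\mathrm{BS}}}{\partial_wp_{\mathrm{BS}}}.
\]
At the money with $w=\Sigma\approx v(s,t)^2\tau$ we have $\partial_wp_{\mathrm{BS}}\approx s/(2\sqrt{2\pi w})$. We also have $\partial_Kp_{\mathrm{BS}}=N(\sqrt w/2)$ and $\partial_sp_{\mathrm{BS}}=-N(-\sqrt w/2)$. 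Since $\partial_Kp_{\mathrm{BS}}+\partial_sp_{\mathrm{BS}}=1-2N(-\sqrt w/2)=O(\sqrt\tau)$ and $P_K+P_s=\E[(1-Y_T)\mathbf 1_{\{S_T<s\}}]$, everything reduces to showing two expansions:
\[
\Pp(S_T<s)=\tfrac12+c_1\sqrt\tau+o(\sqrt\tau),\qquad \E\bigl[(1-Y_T)\mathbf 1_{\{S_T<s\}}\bigr]=c_2\sqrt\tau+o(\sqrt\tau),
\]
together with the leading-order fact $\Sigma(t,s;s)/\tau\to v(s,t)^2$. The last one follows from the standard estimate $P(t,s;s)=s\,v(s,t)\sqrt{\tau/(2\pi)}+o(\sqrt\tau)$, obtained by comparing $S_T$ with $s+a(s,t)(B_T-B_t)$ in $L^1$ under the Lipschitz bound.

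The expansions come from a first-order Watanabe (Itô–Taylor) expansion of the rescaled variable $X^\tau=(S_T-s)/(a(s,t)\sqrt\tau)$. Write $W_r=(B_{t+\tau r}-B_t)/\sqrt\tau$, which is a standard Brownian motion on $[0,1]$. Then
\[
X^\tau=W_1+\sqrt\tau\,\frac{a_s(s,t)}{2}\bigl(W_1^2-1\bigr)+\tau R^\tau,
\]
where the remainder $R^\tau$ is bounded in every $L^p$. By smoothness of $v$ near $(s,t)$, this remainder bound only needs local derivative bounds of order two. To get it without global bounds, stop the process at the exit time $\theta$ from a small neighbourhood of $(s,t)$. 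On that neighbourhood the coefficients are smooth and $a\ge c>0$. The event $\{\theta<T\}$ has probability $O(e^{-c/\tau})$ by exponential martingale estimates, which need only the linear-growth and Lipschitz bound $L$. On this event, the indicator functionals are bounded, and $Y_T$ is controlled in $L^2$. Similarly,
\[
Y_T=1+\sqrt\tau\,a_s(s,t)W_1+\tau\tilde R^\tau .
\]
This yields
\[
\E\bigl[(1-Y_T)\mathbf 1_{\{S_T<s\}}\bigr]=-\sqrt\tau\,a_s\,\E\bigl[W_1\mathbf 1_{\{W_1<0\}}\bigr]+o(\sqrt\tau)=\sqrt\tau\,\frac{a_s}{\sqrt{2\pi}}+o(\sqrt\tau).
\]
For the probability we get
\[
\Pp(X^\tau<0)=\tfrac12+\sqrt\tau\,\frac{a_s}{2}\,\phi(0)\,\E\bigl[1-W_1^2\mid W_1=0\bigr]+o(\sqrt\tau)=\tfrac12+\frac{a_s\sqrt\tau}{2\sqrt{2\pi}}+o(\sqrt\tau).
\]

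\textbf{Main obstacle.} This last step is the expansion of the expectation of a discontinuous functional. It is justified either by Watanabe's generalized-Wiener-functional theory, which requires uniform Malliavin nondegeneracy of $X^\tau$ (true on the localized event, with a smooth cutoff handling the complement), or, more elementarily, by writing $\Pp(X^\tau<0)$ through a Gaussian-smoothed indicator and controlling the error via the $L^p$ bounds on $R^\tau$. We would try the smoothing route first, since it avoids global ellipticity.

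\textbf{Assembly.} Plug in $a_s(s,t)=v+sv_s$ and the Black–Scholes quantities. The common $v$ terms cancel in $\Sigma_K$, giving
\[
s\widehat v_K\to\tfrac12 s v_s(s,t),
\]
which is \eqref{eq:half-rule}, and $\Sigma_K\sim s^{-1}v\,sv_s\,\sqrt\tau\cdot\sqrt\tau\cdot$const. The numerator $\Sigma_s+\Sigma_K$ has the same order, with exactly twice the $v_s$ coefficient once the $v$ terms cancel again. Hence $\R\to2$ by \eqref{eq:SSR-Markov}, and well-definedness follows since $\Sigma_K\neq0$ for small $\tau$ when $v_s(s,t)\neq0$.
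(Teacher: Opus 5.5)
Your proposal is correct in outline. For the numerator of the SSR it takes a genuinely different route from the paper.

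The paper uses the derivative flow (your $Y_T$, the paper's $J_T$) as a Girsanov density and rewrites $P_s(t,s;s)=-\Pp(\widetilde S_T<s)$ for an auxiliary diffusion with drift $aa_s$. It then applies the same first-order Watanabe digital expansion to both $S$ and $\widetilde S$. This costs a second localization, a second uniform nondegeneracy check, and a remark on uniqueness in law for the auxiliary equation. In exchange, $X$ and $Y$ become digital probabilities handled by a single proposition.

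You instead keep $Y_T$ as a weight and use $P_K+P_s=\E[(1-Y_T)\boldsymbol{1}_{\{S_T<s\}}]$. This is exactly the paper's $\Pp(S_T<s)-\Pp(\widetilde S_T<s)$, but you never need its two terms separately to first order. You have $1-Y_T=-a_s(s,t)\sqrt\tau\,W_1+o(\sqrt\tau)$ in $L^2$, which needs only that $a_s$ is bounded and continuous. Cauchy--Schwarz and $\Pp(\{S_T<s\}\triangle\{W_1<0\})\to0$ then give $a_s(s,t)\phi(0)\sqrt\tau+o(\sqrt\tau)$. This uses no Malliavin calculus and in fact no localization, so it is more elementary than the paper's treatment.

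For the denominator, both routes need a genuine first-order expansion of $\Pp(S_T<s)$. Your elementary option works because $G=\frac{a_s}{2}(W_1^2-1)$ is a function of $W_1$ alone, so $W_1+\sqrt\tau G$ has an explicit law with bounded density near $0$. Run it as a sandwich:
$\Pp(W_1+\sqrt\tau G<-\tau^{3/4})-\Pp(|R^\tau|\geq\tau^{-1/4})\leq\Pp(X^\tau<0)\leq\Pp(W_1+\sqrt\tau G<\tau^{3/4})+\Pp(|R^\tau|\geq\tau^{-1/4})$,
with Markov's inequality on the $R^\tau$ terms. A bare Lipschitz bound on an indicator smoothed at scale $\sigma$ does not suffice. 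The comparison error $\tau\E|R^\tau|/\sigma$ needs $\sigma\gg\sqrt\tau$, while the smoothing error, controlled only by anti-concentration, needs $\sigma\ll\sqrt\tau$.

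If you use Watanabe instead, localize the coefficient rather than stopping the process. The paper does this with a globally smooth, elliptic $\bar a$ equal to $a$ near $(s,t)$, transferred by coupling. A stopped diffusion has atoms on the boundary of the neighbourhood and is not a nondegenerate Wiener functional.

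One step you take for granted needs an argument. Both $P_K(t,s;s)=\Pp(S_T<s)$ and the pathwise differentiation behind $P_s=-\E[Y_T\boldsymbol{1}_{\{S_T<s\}}]$ require $\Pp(S_T=s)=0$. Without this the skew and the SSR are not defined, and well-definedness is part of the claim. Your fixed-neighbourhood localization gives only $\Pp(S_T=s)\leq\Pp(\theta<T)$, which is small but not zero. The paper closes this with an exhausting sequence of elliptic localizations on $[1/n,n]$ and the one-dimensional Malliavin absolute-continuity criterion.
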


\section{Proof}\label{sec:proof}

\subsection{Representation by digital probabilities}

Write $S^r$ for the solution starting from $r$ at time $t$.  Fix
$0<\eta_0<s$.  For $0<|\eta|\leq\eta_0$, put
\[
    \Delta_u^\eta=\frac{S_u^{s+\eta}-S_u^s}{\eta}.
\]
The mean value theorem gives
\[
    \Delta_u^\eta
      =1+\int_t^u A_q^\eta\Delta_q^\eta\,\mathrm{d}B_q,
    \qquad
    A_q^\eta
      =\int_0^1
        a_s\bigl(S_q^s+\theta(S_q^{s+\eta}-S_q^s),q\bigr)
        \,\mathrm{d}\theta .
\]
Thus $|A^\eta|\leq L$.  Since $\Delta^\eta$ is a stochastic exponential,
for every $p>1$,
\begin{equation}\label{eq:flow-Lp}
    \sup_{0<|\eta|\leq\eta_0}\E|\Delta_T^\eta|^p
       \leq
       \exp\left\{\frac{p(p-1)}2L^2(T-t)\right\}.
\end{equation}
Moreover, $\Delta_T^\eta\to J_T$ in $L^p$, where
\begin{equation}\label{eq:J}
    \mathrm{d}J_u=a_s(S_u^s,u)J_u\,\mathrm{d}B_u,
    \qquad J_t=1,
    \qquad
    J_u=\mathcal E\left(\int_t^u a_s(S_q^s,q)\,\mathrm{d}B_q\right).
\end{equation}
Indeed, $S^{s+\eta}\to S^s$ in $L^p(\Omega;C([t,T]))$, hence
$A^\eta\to a_s(S^s,\cdot)$ in probability and in every finite $L^p$ after
integration in time; stability of the linear equations then gives the
claim.

We next note that $S_T^s$ has no atoms.  Choose strictly positive coefficients
$a_n\in C_b^\infty(\mathbb{R}\times[0,T_0])$, bounded away from zero,
which agree with $a$ on $[1/n,n]\times[t,T]$.  Let $S^n$ be the
corresponding solutions driven by the same Brownian motion as $S^s$.
Let $\tau_n$ be the first exit time of $S^s$ from $(1/n,n)$.  The two
solutions agree on $\{\tau_n>T\}$.  If $D$ denotes the
Malliavin--Shigekawa derivative operator and $J^n$ the derivative flow of
$S^n$, then
\[
    D_qS_T^n
      =a_n(S_q^n,q)\frac{J_T^n}{J_q^n},
      \qquad t\leq q\leq T.
\]
All factors on the right are strictly positive.  Hence the
Malliavin covariance of $S_T^n$ is strictly positive.  The
one-dimensional Malliavin--Shigekawa criterion then implies that its law
is absolutely continuous.  Since $S^s$ has positive continuous paths,
$\Pp(\tau_n\leq T)\to0$, and therefore, for every $K$,
\[
    \Pp(S_T^s=K)
       \leq \Pp(\tau_n\leq T)+\Pp(S_T^n=K)
       \longrightarrow0.
\]

For $f(x)=(K-x)_+$,
\[
 \frac{f(S_T^{s+\eta})-f(S_T^s)}{\eta}
   =-\Delta_T^\eta\int_0^1
     \boldsymbol{1}_{\{S_T^s+\theta(S_T^{s+\eta}-S_T^s)<K\}}
     \,\mathrm{d}\theta .
\]
The absolute value is bounded by $|\Delta_T^\eta|$.  The quotient
converges in probability to
$-J_T\boldsymbol{1}_{\{S_T^s<K\}}$.  The bound
\eqref{eq:flow-Lp} gives uniform integrability, so
\begin{equation}\label{eq:put-delta-flow}
    P_s(t,s;K)
      =-\E\bigl[J_T\boldsymbol{1}_{\{S_T^s<K\}}\bigr].
\end{equation}

Boundedness of $a_s$ implies that $J$ is a true martingale.  Define
$\mathrm{d}\mathsf Q=J_T\,\mathrm{d}\Pp$ on $\mathcal F_T$.  Under
$\mathsf Q$,
\[
    \widetilde B_u
       =B_u-\int_t^u a_s(S_q^s,q)\,\mathrm{d}q
\]
is Brownian, and $S^s$ satisfies
\begin{equation}\label{eq:tilde-SDE}
    \mathrm{d}\widetilde S_u
      =a(\widetilde S_u,u)a_s(\widetilde S_u,u)\,\mathrm{d}u
       +a(\widetilde S_u,u)\,\mathrm{d}\widetilde B_u,
    \qquad \widetilde S_t=s.
\end{equation}
We write $\widetilde S$ for a process with this law under $\Pp$.
Local smoothness gives uniqueness in law up to the first exit from compact
subsets of $(0,\infty)$, which is sufficient here.  Equation
\eqref{eq:put-delta-flow} becomes
\begin{equation}\label{eq:q-digital}
    P_s(t,s;K)
       =-\Pp(\widetilde S_T<K\mid\widetilde S_t=s).
\end{equation}

Write $w=\Sigma(t,s;s)$.  Local ellipticity implies $w>0$.  Let $\Phi$
and $\phi$ denote the standard normal distribution function and density.  At
the money,
\begin{equation}\label{eq:BS-derivatives}
\begin{split}
    (p_{\mathrm{BS}})_s(s,s,w)&=-\Phi(-\sqrt{w}/2),\\
    (p_{\mathrm{BS}})_K(s,s,w)&= \Phi( \sqrt{w}/2),\\
    (p_{\mathrm{BS}})_w(s,s,w)&=
       \frac{s\phi(\sqrt{w}/2)}{2\sqrt{w}}.
\end{split}
\end{equation}
Define
\begin{align}
    Y(t,s;T)
      &=\Pp(S_T<s\mid S_t=s)-\Phi(\sqrt{w}/2), \label{eq:Y}\\
    X(t,s;T)
      &=Y(t,s;T)+\Phi(-\sqrt{w}/2)
        -\Pp(\widetilde S_T<s\mid\widetilde S_t=s). \label{eq:X}
\end{align}
The absence of atoms also gives
$P_K(t,s;K)=\Pp(S_T<K\mid S_t=s)$.  Since
$(p_{\mathrm{BS}})_w>0$, implicit differentiation of
\eqref{eq:implied-variance} gives
\begin{equation}\label{eq:XY-Sigma}
    Y=(p_{\mathrm{BS}})_w\Sigma_K,
    \qquad
    X=(p_{\mathrm{BS}})_w(\Sigma_s+\Sigma_K).
\end{equation}
Consequently,
\begin{equation}\label{eq:R-XY}
    \R(t,s;T)=\frac{X(t,s;T)}{Y(t,s;T)}.
\end{equation}
Moreover,
\begin{equation}\label{eq:Y-skew}
    Y(t,s;T)
      =\sqrt{T-t}\,\phi(\sqrt{w}/2)\mathcal{S}(t,s;T).
\end{equation}

\subsection{Localization}

Fix $(t,s)$ and choose $\rho,\delta>0$ such that
\[
    Q=[s-2\rho,s+2\rho]\times[t,t+\delta]
       \subset(0,\infty)\times[0,T_0].
\]
Set
\[
    A=\sup_Q|a|,
    \qquad C=\sup_Q|a a_s|.
\]
Let $\tau$ and $\widetilde\tau$ be the first exit times of $S$ and
$\widetilde S$ from $(s-\rho,s+\rho)$, and put $h=T-t\leq\delta$.
For the stopped martingale part $N$ of $S-s$, the
Burkholder--Davis--Gundy inequality gives, for every $p\geq2$,
\[
 \E\left[\sup_{t\leq u\leq t+h}|N_u|^p\right]
    \leq c_p A^p h^{p/2}.
\]
An exit of $S$ requires $\sup|N|\geq\rho$.  For $\widetilde S$, the
finite-variation part before exit has absolute value at most $Ch$.
Consequently, if $Ch\leq\rho/2$,
\begin{equation}\label{eq:exit-bound}
 \Pp(\tau\leq t+h)+\Pp(\widetilde\tau\leq t+h)
    \leq c_p h^{p/2}.
\end{equation}

Choose a strictly positive coefficient
$\bar a\in C_b^\infty(\mathbb{R}\times[0,T_0])$ which is bounded away
from zero and agrees with $a$ on $Q$.  Then
$\bar a\,\bar a_s=a a_s$ on $Q$.  Let $\bar S$ and
$\bar{\widetilde S}$ solve the corresponding localized original and
auxiliary equations, driven by the same Brownian motions as $S$ and
$\widetilde S$.  Pathwise uniqueness implies equality up to the respective
exit times.

Introduce the three quantities to which localization will be applied:
\[
\begin{aligned}
 d_h&=\Pp(S_{t+h}<s\mid S_t=s),\\
 \widetilde d_h&=\Pp(\widetilde S_{t+h}<s\mid\widetilde S_t=s),\\
 p_h&=\E[(s-S_{t+h})_+\mid S_t=s].
\end{aligned}
\]
and let $\bar d_h,\bar{\widetilde d}_h,\bar p_h$ denote their localized
counterparts.  The coupling gives, without any regularity requirement on
the digital payoff,
\[
 |d_h-\bar d_h|\leq\Pp(\tau\leq t+h),
 \qquad
 |\widetilde d_h-\bar{\widetilde d}_h|
       \leq\Pp(\widetilde\tau\leq t+h).
\]
Taking $p=2$ in \eqref{eq:exit-bound}, both differences are
$O(h)=o(\sqrt h)$.

For the put payoff, equality before exit and its Lipschitz continuity give
\[
 |p_h-\bar p_h|
 \leq
 \E\left[(|S_{t+h}|+|\bar S_{t+h}|)
          \boldsymbol{1}_{\{\tau\leq t+h\}}\right].
\]
The second moments of $S_{t+h}$ and $\bar S_{t+h}$ are bounded uniformly
for small $h$.  By Cauchy--Schwarz and \eqref{eq:exit-bound} with $p=4$,
the right-hand side is $O(h)$.  We have therefore proved
\begin{equation}\label{eq:localization-transfer}
 d_h=\bar d_h+o(\sqrt h),\qquad
 \widetilde d_h=\bar{\widetilde d}_h+o(\sqrt h),\qquad
 p_h=\bar p_h+o(\sqrt h).
\end{equation}
These are the only quantities transferred from the localized model.
In particular, no asymptotic formula for an implied-volatility derivative
is differentiated or localized: the exact identities
\eqref{eq:XY-Sigma} reduce the problem to
\eqref{eq:localization-transfer} before localization is used.

\subsection{Small-time expansion}

Put
\[
    \varepsilon=\sqrt{T-t},\qquad
    a_0=a(s,t),\qquad a_1=a_s(s,t),\qquad v_0=\frac{a_0}{s}.
\]
Since $\bar a=a$ on $Q$,
\[
    \bar a(s,t)=a_0,\qquad \bar a_s(s,t)=a_1.
\]
For $0\leq u\leq1$, define
\[
    \bar S_u^\varepsilon=\bar S_{t+\varepsilon^2u},\qquad
    \bar{\widetilde S}_u^\varepsilon
       =\bar{\widetilde S}_{t+\varepsilon^2u},\qquad
    W_u=\frac{B_{t+\varepsilon^2u}-B_t}{\varepsilon},
\]
where $B$ denotes the Brownian motion driving the corresponding equation.
The stochastic Taylor expansions for the localized equations hold in
every Malliavin--Shigekawa Sobolev space:
\begin{align}
    \bar S_1^\varepsilon
      &=s+\varepsilon a_0W_1
        +\frac{\varepsilon^2a_0a_1}{2}(W_1^2-1)
        +O_{\mathbb{D}^\infty}(\varepsilon^3),
        \label{eq:S-expansion}\\
    \bar{\widetilde S}_1^\varepsilon
      &=s+\varepsilon a_0W_1
        +\frac{\varepsilon^2a_0a_1}{2}(W_1^2+1)
        +O_{\mathbb{D}^\infty}(\varepsilon^3).
        \label{eq:tilde-expansion}
\end{align}
The extra constant in \eqref{eq:tilde-expansion} is the contribution of
the drift $a a_s$.

Define
\[
    \bar F^\varepsilon
       =\frac{\bar S_1^\varepsilon-s}{\varepsilon a_0},
    \qquad
    \bar{\widetilde F}^\varepsilon
       =\frac{\bar{\widetilde S}_1^\varepsilon-s}
              {\varepsilon a_0}.
\]
Then
\[
    \bar F^\varepsilon
       =W_1+\varepsilon G
                    +O_{\mathbb{D}^\infty}(\varepsilon^2),
    \qquad
    \bar{\widetilde F}^\varepsilon
       =W_1+\varepsilon\widetilde G
                    +O_{\mathbb{D}^\infty}(\varepsilon^2),
\]
where
\[
    G=\frac{a_1}{2}(W_1^2-1),
    \qquad
    \widetilde G=\frac{a_1}{2}(W_1^2+1).
\]
Both families are uniformly Malliavin nondegenerate.  Indeed, let
$\underline a>0$ be a lower bound for $\bar a$ and let
$\mathcal J_{r,1}^\varepsilon$ denote the derivative flow from time $r$
to time $1$.  For the first family,
\[
   D_r\bar F^\varepsilon
      =\frac{\bar a(\bar S_r^\varepsilon,t+\varepsilon^2r)}{a_0}
         \mathcal J_{r,1}^\varepsilon,
\]
and the analogous formula holds for $\bar{\widetilde F}^\varepsilon$.
Thus
\[
   \gamma_{\bar F^\varepsilon}
      \geq \frac{\underline a^2}{a_0^2}
          \inf_{0\leq r\leq1}
             |\mathcal J_{r,1}^\varepsilon|^2.
\]
The coefficients of the rescaled derivative-flow equations are bounded
uniformly in $\varepsilon$.  The flows and their inverses therefore have
uniform maximal moments of every order, which gives uniform inverse
moments of both Malliavin covariances.

Proposition~\ref{prop:Watanabe-digital} gives
\[
    \bar d_h
      =\frac12-\varepsilon\E[G\delta_0(W_1)]+o(\varepsilon),
    \qquad
    \bar{\widetilde d}_h
      =\frac12-\varepsilon\E[\widetilde G\delta_0(W_1)]
         +o(\varepsilon).
\]
Since
\[
    \E[G\delta_0(W_1)]=-\frac{a_1}{2}\phi(0),
    \qquad
    \E[\widetilde G\delta_0(W_1)]=\frac{a_1}{2}\phi(0),
\]
we obtain
\[
    \bar d_h
      =\frac12+\frac{a_1}{2}\phi(0)\sqrt h+o(\sqrt h),
    \qquad
    \bar{\widetilde d}_h
      =\frac12-\frac{a_1}{2}\phi(0)\sqrt h+o(\sqrt h).
\]
Moreover, \eqref{eq:S-expansion} and the Lipschitz continuity of the put
payoff give
\[
    \bar p_h=a_0\phi(0)\sqrt h+o(\sqrt h).
\]
Combining these three localized expansions with
\eqref{eq:localization-transfer} yields, for the original model,
\begin{align}
    \Pp(S_T<s\mid S_t=s)
      &=\frac12+\frac{a_1}{2}\phi(0)\sqrt{T-t}
        +o(\sqrt{T-t}), \label{eq:S-digital}\\
    \Pp(\widetilde S_T<s\mid\widetilde S_t=s)
      &=\frac12-\frac{a_1}{2}\phi(0)\sqrt{T-t}
        +o(\sqrt{T-t}), \label{eq:tilde-digital}
\end{align}
and
\begin{equation}\label{eq:put-expansion}
    P(t,s;s)=a_0\phi(0)\sqrt{T-t}+o(\sqrt{T-t}).
\end{equation}

Since
\[
    p_{\mathrm{BS}}(s,s,w)
      =s\bigl(2\Phi(\sqrt{w}/2)-1\bigr)
      =s\phi(0)\sqrt{w}+O(w^{3/2}),
\]
\eqref{eq:put-expansion} first implies $w\to0$ and then
\begin{equation}\label{eq:w-expansion}
    \sqrt{w}=v_0\sqrt{T-t}+o(\sqrt{T-t}).
\end{equation}

\subsection{Conclusion}

Equations \eqref{eq:Y}, \eqref{eq:S-digital}, and
\eqref{eq:w-expansion} imply
\begin{align}
    Y(t,s;T)
      &=\frac{\phi(0)}{2}(a_1-v_0)\sqrt{T-t}
        +o(\sqrt{T-t}) \notag\\
      &=\frac{\phi(0)}{2}s v_s(s,t)\sqrt{T-t}
        +o(\sqrt{T-t}). \label{eq:Y-expansion}
\end{align}
Similarly,
\begin{equation}\label{eq:X-expansion}
    X(t,s;T)
      =\phi(0)s v_s(s,t)\sqrt{T-t}
        +o(\sqrt{T-t}).
\end{equation}
The one-half rule follows from \eqref{eq:Y-skew} and
\eqref{eq:Y-expansion}.  If $v_s(s,t)\neq0$, the SSR limit follows from
\eqref{eq:R-XY}, \eqref{eq:Y-expansion}, and \eqref{eq:X-expansion}.
\qed

\appendix

\section{Watanabe expansion for digital probabilities}
\label{app:Watanabe}

We use Theorems~2.1 and~2.3 of Watanabe
\cite{Watanabe}.  Theorem~2.1 defines the pullback $\mathcal T(F)$ of a
tempered distribution $\mathcal T$ by a nondegenerate Wiener functional
$F$.  Theorem~2.3 gives an asymptotic expansion of these pullbacks for a
uniformly nondegenerate family.  The proposition below is its
one-dimensional first-order specialization with $\mathcal T=H_-$.

For a Wiener functional $F$, write
\[
    \gamma_F=\int_0^1|D_uF|^2\,\mathrm{d}u
\]
for its Malliavin covariance, where $D$ is the Malliavin--Shigekawa
derivative operator.  Write
$\mathbb{D}^\infty=\bigcap_{k,p}\mathbb{D}^{k,p}$ and let
$H_-(x)=\boldsymbol{1}_{\{x<0\}}$, regarded as a tempered distribution.
Under nondegeneracy, Theorem~2.1 of \cite{Watanabe} defines both
$H_-(F)$ and $\delta_0(F)$ as generalized Wiener functionals.

\begin{proposition}[First-order digital expansion]
\label{prop:Watanabe-digital}
Let $F^\varepsilon,F_0,F_1\in\mathbb{D}^\infty$ and suppose, in every
$\mathbb{D}^{k,p}$,
\[
    F^\varepsilon=F_0+\varepsilon F_1+o(\varepsilon).
\]
Assume that for every $p>1$, there are $\varepsilon_0>0$ and $C_p<\infty$
such that
\[
    \sup_{0\leq\varepsilon\leq\varepsilon_0}
       \E[\gamma_{F^\varepsilon}^{-p}]\leq C_p.
\]
Then, in the space of generalized Wiener functionals,
\[
    H_-(F^\varepsilon)
      =H_-(F_0)-\varepsilon F_1\delta_0(F_0)+o(\varepsilon).
\]
Consequently,
\[
    \Pp(F^\varepsilon<0)
       =\Pp(F_0<0)-\varepsilon\E[F_1\delta_0(F_0)]+o(\varepsilon).
\]
\end{proposition}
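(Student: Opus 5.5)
The plan is to specialize Watanabe's Theorem~2.3 to the tempered distribution $\mathcal T=H_-$ and to a one-dimensional family. In essence this is a first-order Taylor expansion of $\mathcal T(F^\varepsilon)$ in $\varepsilon$, and Watanabe's framework makes it rigorous.

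First, I rewrite $H_-$ so that it can be handled by integration by parts. Write $H_-=\psi^{(2)}$ up to a smooth correction, where $\psi$ is continuous with polynomial growth; for example, $\psi(x)=\tfrac12 x_-^2$ gives $\psi''=H_-$ in the distributional sense. More generally, every tempered distribution is $(1+x^2-\tfrac{d^2}{dx^2})^m$ applied to a bounded continuous function, which is the form Watanabe's Theorem~2.1 uses.

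Given the uniform bound on $\E[\gamma_{F^\varepsilon}^{-p}]$, the integration-by-parts formula yields a uniform estimate
\[
\|\mathcal T(F)\|_{\mathbb D^{-2m,p}}\le C\|(1+x^2-\tfrac{d^2}{dx^2})^{-m}\mathcal T\|_\infty.
\]
The constant $C$ is uniform over $F$ in the family, and it depends on $\mathbb D^{k,q}$ norms of $F$ and on inverse moments of $\gamma_F$. The same uniform nondegeneracy also holds for the interpolants $F_0+\theta(F^\varepsilon-F_0)$, $\theta\in[0,1]$. Indeed, their Malliavin covariances are close to $\gamma_{F_0}$ in $L^p$ for small $\varepsilon$. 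Here one needs $\gamma_{F_0}^{-1}\in\bigcap_p L^p$, which follows by Fatou from the uniform bound. To control the interpolants, I use Watanabe's localization trick with a smooth cutoff $\chi(\gamma_{F^\varepsilon}/\gamma_{F_0})$, which costs $O(\varepsilon^\infty)$. Alternatively, I can simply invoke Theorem~2.3, which packages exactly this step.

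Next, for smooth $\mathcal T$ I would apply Taylor's formula with integral remainder:
\[
\mathcal T(F^\varepsilon)=\mathcal T(F_0)+\mathcal T'(F_0)(F^\varepsilon-F_0)+\int_0^1(1-\theta)\mathcal T''(F_\theta)(F^\varepsilon-F_0)^2\,\mathrm d\theta .
\]
I then extend this identity to tempered $\mathcal T$ by density, using the uniform $\mathbb D^{-\infty}$ bounds above; the products are paired with $\mathbb D^\infty$ elements. With $\mathcal T=H_-$ we have $\mathcal T'=-\delta_0$, so the linear term is $-\delta_0(F_0)(\varepsilon F_1+o(\varepsilon))$.

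The remainder term is $O(\varepsilon^2)$ in $\mathbb D^{-k,p}$ for some $k$, because $(F^\varepsilon-F_0)^2=O(\varepsilon^2)$ in every $\mathbb D^{k,p}$ and $\delta_0'(F_\theta)$ is uniformly bounded in $\mathbb D^{-k,p}$. Likewise, the $o(\varepsilon)$ part of the linear term stays $o(\varepsilon)$ after multiplication by $\delta_0(F_0)\in\mathbb D^{-k,p}$. Together these give the expansion in generalized Wiener functionals.

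For the consequence, pair both sides with the constant $1\in\mathbb D^\infty$. Since $H_-(F^\varepsilon)$ is a genuine bounded functional, $\E[H_-(F^\varepsilon)]=\Pp(F^\varepsilon<0)$, and the pairing is continuous on $\mathbb D^{-k,p}$. This yields the stated formula.

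I expect the main obstacle to be the uniform nondegeneracy of the interpolants $F_\theta$, which is what justifies the remainder bound for the distribution $\delta_0'$. I would handle it by Watanabe's truncation argument rather than proving it directly.
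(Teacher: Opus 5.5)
Your proposal is correct and follows the paper's route: specialize Watanabe's Theorem~2.3 to $\mathcal T=H_-$, read off the first-order coefficient from the formal Taylor expansion using $H_-'=-\delta_0$, and pair with the constant $1$ to obtain the probability formula. The extra mechanics you sketch are sound, with one correction: if you carry out the truncation yourself, take the cutoff in $\|D(F^\varepsilon-F_0)\|_H^2/\gamma_{F_0}$ rather than in $\gamma_{F^\varepsilon}/\gamma_{F_0}$. The former keeps $\gamma_{F_\theta}$ bounded below by a constant multiple of $\gamma_{F_0}$ on the support of the cutoff, whereas a ratio $\gamma_{F^\varepsilon}/\gamma_{F_0}$ near one does not rule out $DF^\varepsilon\approx-DF_0$, and hence does not rule out a degenerate interpolant.
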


\begin{proof}
Apply Theorem~2.3 of \cite{Watanabe}, truncated after the first-order
term, to the tempered distribution $H_-$.  The coefficients are those
of the formal distributional Taylor expansion
\[
 H_-(F_0+\varepsilon F_1)
   =H_-(F_0)+\varepsilon F_1H_-'(F_0)+o(\varepsilon).
\]
Since $H_-'=-\delta_0$, this gives the asserted expansion in the space of
generalized Wiener functionals.  Pairing it with the constant test
functional $1\in\mathbb{D}^\infty$ gives the probability formula.
\end{proof}

\section*{Declaration on the use of generative AI}

ChatGPT (OpenAI) was used to draft and edit the exposition and prepare the
\LaTeX{} manuscript.  The author reviewed all AI-assisted material and takes
full responsibility for the content.

\end{document}